\newtheorem{theorem}{Theorem}
\newtheorem{lemma}{Lemma}
\newtheorem{example}{Example}
\newtheorem{definition}{Definition}
\newtheorem{remark}{Remark}
\newtheorem*{thA}{Theorem A}
\newcommand{\te}{\theta}
\author[G. Pant]{Garima Pant}
\address{garima pant; department of mathematics, university of delhi, delhi-110007, india.}
\email{garimapant.m@gmail.com}
\thanks {Research work of the author is supported by research fellowship from University Grants Commission (UGC), New Delhi, India.}
\title[Growth of solutions]{Growth of solutions of second order complex linear differential equations}
\subjclass[2010]{34M10, 30D35}
\keywords {entire function, order of growth, hyper-order of growth, exponent of convergence of zeros, complex differential equation}
\begin{document}
	\maketitle
	\begin{abstract}
In this paper, we study about order and hyper-order of growth of non-trivial solutions of $f''+A(z)f'+B(z)f=0$, where $A(z)$ and $B(z)$ are entire functions having some restrictions. These restrictions involve notions of Yang's inequality, Borel exceptional value, deficient value and accumulation ray. 
	\end{abstract}
	\section{\textbf{Introduction}}
	Consider the second order linear differential equation
	\begin{equation}\label{sde}
		f''+A(z)f'+B(z)f=0
	\end{equation}
	where $A(z)$ and $B(z)(\not\equiv 0)$ are entire functions. It is a well known result by Herold that every solution of \eqref{sde} is an entire function \cite{herold}. For an entire function $f(z)$, the order of growth, the hyper-order of growth and the exponent of convergence of zeros of $f(z)$ are defined by
	$$\rho(f)=\limsup\limits_{r\to\infty}\frac{\log\log M(r,f)}{\log r},$$
	
	$$\rho_2(f)=\limsup_{r\to \infty}\frac{\log\log T(r,f)}{\log r}$$
	
	and $$\lambda(f)=\limsup\limits_{r\to\infty}\frac{\log n(r,\frac{1}{f})}{\log r}$$
	respectively, where $M(r,f)=\max_{|z|=r}|f(z)|$ is the maximum modulus of $f(z)$ on the circle of radius $r$, $T(r,f)$ is the characteristic function of $f(z)$ and  $n(r,1/f)$ is the number of zeros of $f(z)$ in $|z|\leq r$. By applying Wiman-Valiron theory it has been proved that if $A(z)$ and $B(z)$ are polynomials then, all solutions of \eqref{sde} are of finite order and vice versa. Thus, if at least one of the $A(z)$ and $B(z)$ is a transcendental entire function, then almost all solutions of \eqref{sde} are of infinite order. So it is quite interesting to ask what conditions on $A(z)$ and $B(z)$ will guarantee that all non-trivial solutions of \eqref{sde} have infinite order? There are  many results  concerning this problem. Some of them are as follows:
	
	In 1988, Gundersen \cite{finitegun} proved that if $A(z)$ and $B(z)$ are entire functions satisfying one of the following conditions:
	\begin{enumerate}
		\item $\rho(A)<\rho(B)$
		\item $A(z)$ is a polynomial and $B(z)$ is a transcendental entire function
		\item $\rho(B)<\rho(A)<1/2$
		\item $A(z)$ is a transcendental entire function with $\rho(A)=0$ and $B(z)$ is a polynomial.
	\end{enumerate}
	Then, every non-trivial solution of \eqref{sde} is of infinite order.\\
	In this sequel, Hellerstein, Miles and Rossi \cite{hmr} proved that if  $\rho(B)<\rho(A)=1/2$, then every non-trivial solutions of \eqref{sde} is of infinite order.\\
	
	For the case $\rho(B)\leq\rho(A)$ and $\rho(A)> 1/2$, it is seen that the above conclusion does not hold in general. Here, we illustrate these conditions by some examples:
	\begin{example}\rm
		The differential equation $$f''-e^{-z}f'-(e^{-z}+1)f=0$$
		has a non-trivial solution $f(z)=e^{-z}$ of order one, where $\rho(A)=\rho(B)$ and $\rho(A)>1/2$.
	\end{example}
	
	\begin{example}\rm
		The differential equation  $$f''+e^{-z}f'- f=0$$
		has $f(z)=e^{z}+1$ as its non-trivial finite order solution. Here $\rho(B)<\rho(A)$ and  $\rho(A)>1/2$. 
	\end{example}
	
	But under some conditions on $A(z)$ and $B(z)$ satisfying together with $\rho(B)<\rho(A)$, $\rho(A)> 1/2$ or $\rho(A)=\rho(B)$, it was observed that every non-trivial solution of \eqref{sde} is of infinite order. For this one may refer to  \cite{sm, sm2, jlongfab, ozawa}. \\
	In this order, some authors have used a condition that the coefficient $A(z)$ in \eqref{sde} is a non-trivial solution of the following equation
	\begin{equation}\label{polyeq}
		w''+P(z)w=0
	\end{equation}
	where $P(z)=a_mz^m+...+a_0$ is a polynomial of degree $m\geq1$, see \cite{jlongfab, wu, yz}. \\  
	
	In 2018, a problem was studied by J. Long and X. Wu which is as follows:
	\begin{thA}\rm{\cite{longwu}}
		Let $A(z)$ and $B(z)$ be two linearly independent solutions of \eqref{polyeq}. Suppose the number of accumulation rays of the zero sequence of $A(z)$ is less than $m+2$. Then, all non-trivial solutions $f$ of \eqref{sde} are of infinite order. 
	\end{thA}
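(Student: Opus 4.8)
The plan is to argue by contradiction. Assume $f\not\equiv0$ solves \eqref{sde} with $\rho(f)<\infty$; I will use the hypothesis on the zeros of $A$ to produce a contradiction. First I would record the basic structure of \eqref{polyeq}: since $\deg P=m\ge1$, both $A$ and $B$ are entire of order $\rho_0:=(m+2)/2\ge\tfrac32$, all their zeros are simple, the Wronskian $W(A,B)=AB'-A'B$ is a non-zero constant (there is no first-order term in \eqref{polyeq}), and in particular $A,B$ have no common zeros. Also $f$ cannot be constant (otherwise $Bf\equiv0$), and the argument below works verbatim whether $f$ is a polynomial or transcendental, so I may assume $f$ transcendental of finite order.

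\textbf{Step 1: from the accumulation-ray hypothesis to sectorial bounds.} Here I would invoke Hille's asymptotic theory of \eqref{polyeq} (see \cite{jlongfab, wu, yz}): the plane is cut by $m+2$ critical rays into $m+2$ Stokes sectors, each carrying a recessive solution that is unique up to a constant factor, and a critical ray $\Gamma$ is an accumulation ray of the zeros of a non-trivial solution $w$ of \eqref{polyeq} unless $w$ is a constant multiple of the recessive solution of one of the two Stokes sectors bounded by $\Gamma$. Since the zeros of $A$ fail to accumulate to at least one critical ray (the hypothesis), $A$ must be a constant multiple of the recessive solution $u_S$ of some Stokes sector $S$. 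Fixing a ray $\arg z=\theta^{*}$ interior to $S$, the WKB/Liouville--Green description of $u_S$ and $u_S'$ inside $S$ supplies $\delta>0$ and $R_0>0$ with
\[
|A(z)|\le e^{-\delta|z|^{\rho_0}},\qquad |A'(z)|\le e^{-\delta|z|^{\rho_0}}\qquad(\arg z=\theta^{*},\ |z|\ge R_0).
\]
Because $B$ is linearly independent of $A=c\,u_S$, it is not a multiple of $u_S$, hence it is dominant in $S$, and (after shrinking $\delta$ if necessary)
\[
|B(z)|\ge e^{\delta|z|^{\rho_0}}\qquad(\arg z=\theta^{*},\ |z|\ge R_0).
\]

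\textbf{Step 2: inserting this into \eqref{sde}.} Next I would rewrite \eqref{sde} as $B=-\tfrac{f''}{f}-A\tfrac{f'}{f}$ and apply Gundersen's estimate for the logarithmic derivative along rays to the finite-order function $f$: there is an exceptional set $E\subset[0,2\pi)$ of linear measure zero such that for each $\psi\notin E$ one has $\bigl|f^{(k)}(z)/f(z)\bigr|\le|z|^{M}$ ($k=1,2$, for some $M>0$) whenever $\arg z=\psi$ and $|z|$ is large. After enlarging $E$ by the (countably many) directions containing a zero of $f$, I would pick $\theta^{*}$ interior to $S$ with $\theta^{*}\notin E$ — possible because the interior of $S$ has positive angular measure. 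Then along $\arg z=\theta^{*}$, for $|z|$ large we have $f(z)\ne0$ and
\[
e^{\delta|z|^{\rho_0}}\le|B(z)|\le\left|\frac{f''(z)}{f(z)}\right|+|A(z)|\left|\frac{f'(z)}{f(z)}\right|\le |z|^{M}+e^{-\delta|z|^{\rho_0}}|z|^{M}\le 2|z|^{M},
\]
which is impossible for large $|z|$ since $\rho_0>0$. Hence \eqref{sde} has no non-trivial finite-order solution, which is Theorem~A.

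The step I expect to be the real obstacle is Step 1 — converting the combinatorial statement ``fewer than $m+2$ accumulation rays'' into the two pointwise bounds on one common ray. This is precisely where Hille's asymptotic analysis of \eqref{polyeq} is essential: one needs that a solution whose zeros avoid a critical ray must be recessive in an adjacent Stokes sector, that recessiveness is a one-dimensional condition (so the linearly independent companion $B$ is forced to be dominant in the same sector), and that recessive and dominant solutions obey the sharp bounds $e^{\mp\delta|z|^{\rho_0}}$ in the interior of that sector. Granting this sectorial picture, Step 2 is the routine logarithmic-derivative argument, and it uses $m\ge1$ only through $\rho_0=(m+2)/2>0$, which already makes the exponential lower bound for $|B|$ outgrow every power of $|z|$.
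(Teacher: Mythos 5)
Your argument is correct in substance, but be aware that this paper never proves Theorem A: it is imported from Long and Wu \cite{longwu} purely as motivation, so there is no in-paper proof to match against. What you have written is the mirror image of the machinery the paper builds for its own results. The paper's Lemma \ref{new} shows that \emph{exactly} $m+2$ accumulation rays force a solution of \eqref{polyeq} to blow up exponentially in every Hille sector (using part (3) of Lemma \ref{hillelemma}); your Step 1 runs the same dichotomy the other way, concluding that \emph{fewer} than $m+2$ accumulation rays force $A$ to decay in some sector $S$, and then uses uniqueness of the recessive solution to make the linearly independent $B$ dominant in the same $S$; your Step 2 is the same Gundersen ray estimate the paper invokes in \eqref{guneq}, and the contradiction $e^{\delta r^{\rho_0}}\le 2r^{M}$ is exactly the paper's concluding inequality in each case of Theorem \ref{main1}. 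Two points deserve tightening, though neither is a gap in substance. First, your passage from ``some critical ray is not an accumulation ray'' to ``$A$ decays in an adjacent sector'' is the contrapositive of ``blow-up on both sides of $\theta_j$ implies $\lambda_{\theta_j}(A)=\rho(A)$''; Lemma \ref{hillelemma}(4) as stated only yields infinitely many zeros near $\theta_j$, not the full exponent of convergence, so you need the quantitative form of Hille's asymptotics (the zero-counting function near such a ray grows like $r^{(m+2)/2}$), which is the content of \cite{longwuwu}. Second, the claim that recessiveness in $S$ is one-dimensional (so that $B$, being independent of $A$, must blow up in $S$) should be justified in one line, e.g.\ two independent decaying solutions would force the constant, non-zero Wronskian to tend to zero along rays interior to $S$. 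With those two remarks supplied, the proof is complete and is, as far as one can tell, the same route taken in \cite{longwu}.
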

	Motivated by above theorem, we consider a problem: What if the number of accumulation rays of zero sequence of a solution of \eqref{polyeq} are exactly $m+2$? Our results are based on that problem. To state and prove our main results, we first recall some definitions.
	\begin{definition}\rm{\cite{longwuwu}}
		Suppose that $f(z)$ is a meromorphic function in a finite complex plane. Let $\gamma=re^{i\te}$ be a ray from origin. For each $\epsilon>0$, the exponent of convergence of zero sequence of $f(z)$ at the ray  $\gamma=re^{i\te}$ is denoted by  $\lambda_{\te}(f)=\lim\limits_{\epsilon\to 0^+} \lambda_{\te,\epsilon}(f)$,\\
		where $$ \lambda_{\te,\epsilon}(f)=\limsup_{r\to \infty}\frac{\log n(S(\te-\epsilon,\te+\epsilon),f=0)}{\log r}$$
		here, $n(S(\te-\epsilon,\te+\epsilon),f=0)$ counts the number of zeros of $f(z)$ with multiplicities in the angular sector $S(\te-\epsilon,\te+\epsilon)=\{z : \te-\epsilon\leq\arg z\leq\te+\epsilon, |z|>0\}.$
	\end{definition}
	\begin{definition}\rm
		A ray $\gamma=re^{i\te}$ is called an accumulation ray of zero sequence of $f(z)$ if $\lambda_{\te}(f)=\rho(f).$
	\end{definition}
	
	The following remark is immediate. 
	\begin{remark}\rm
		\begin{enumerate}[(i)]
			\item The number of accumulation rays of the zero sequence of every non-trivial solution of \eqref{polyeq} is less than or equal to $m+2$.
			\item The set of accumulation rays of the zero sequence of every non-trivial solution of \eqref{polyeq} is a subset of $\{\te_j : 0\leq j\leq m+1\}$, where $\te_j=\frac{2j\pi-\arg(a_m)}{m+2} ~; 0\leq j\leq m+1.$
		\end{enumerate}
	\end{remark}
	\begin{definition} \rm{\cite{yang}}
		Suppose that $f(z)$ is a meromorphic function of order $\rho(f)\in(0,\infty)$. A ray $\arg z=\te$ from the origin is called a Borel direction of order $\rho(f)$ of $f(z)$, if for any $\epsilon>0$ and for any complex value $a\in\mathbb{C}\cup\{\infty\}$ with at most two exceptions, we have
		
		$$\limsup_{r\to\infty}\frac{\log n(S(\te - \epsilon, \te + \epsilon,r),a,f)}{\log r} = \rho(f)$$ 
		where, $n(S(\te-\epsilon,\te+\epsilon,r),a,f))$ denotes the number of zeros, counting multiplicities of $f-a$ in the region $S(\te - \epsilon,\te+\epsilon,r)=\{z: \te-\epsilon<arg z<\te+\epsilon,|z|<r\}.$
	\end{definition}
	\begin{example}\rm{\cite{yang}}
		Consider the entire function $f(z)=e^{-z^{n}}$ of order $n$. It has $2n$ Borel directions given by
		$$\arg z=\frac{(2k-1)\pi}{2n}, \qquad k=1,2,...,2n.$$
	\end{example}
	\begin{definition}\rm
		Let $f(z)$ be an entire function of finite order $\rho(f)\in(0,\infty)$. If $f(z)$ has $p$ number of Borel directions and $q$ number of finite deficient values, then $f(z)$ is called extremal for Yang's inequality if $q=p/2.$
	\end{definition}
	\begin{example}\rm
		The entire function $e^{z}$ has only one finite deficient value at $z=0$. It has two Borel directions at $\te=\pi/2$ and $\te=-\pi/2$. Thus, $e^{z}$ is an entire function extremal for Yang's inequality. 
	\end{example}
	\begin{definition}\rm{\cite{yang}}
		Suppose that $f(z)$ is a meromorphic function of finite order $\rho(f)\in(0,
		\infty)$ in the finite plane. A complex number $a$ is called an exceptional value of $f(z)$ in the sense of Borel if
		$$\limsup_{r\to\infty}\frac{\log n(r,f=a)}{\log r}<\rho(f).$$
	\end{definition}
	Now we are prepared to state our first main result in which we study about order of growth as well as hyper-order of growth of non-trivial solutions of \eqref{sde}. 
	\begin{theorem}\rm\label{main1}
		Suppose that $B(z)$ is a non-trivial solution of \eqref{polyeq} such that the number of accumulation rays of zero sequence of $B(z)$ are exactly $m+2$ and $A(z)$ satisfies any one of the following conditions:
	\begin{enumerate}
	\item $A(z)$ is an entire function extremal for Yang's inequality.
	\item  $A(z)$ is an entire function having a finite Borel exceptional value.
	\end{enumerate}
	Then, every non-trivial solution $f$ of \eqref{sde} is of infinite order.
		Moreover,
		$$\min\{\rho(A),\rho(B)\}\leq\rho_2(f)\leq\max\{\rho(A),\rho(B)\}$$
		whenever $A(z)$ and $B(z)$ are of finite order.
	\end{theorem}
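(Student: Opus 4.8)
The plan is to exhibit a sector in which $B$ is exponentially large while $A$ stays bounded, and then to read off both conclusions from the identity $B=-f''/f-Af'/f$.

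First I would locate the large values of $B$ using the classical theory of \eqref{polyeq}. Every non-trivial solution of \eqref{polyeq} has order $\rho(B)=(m+2)/2$, and the $m+2$ critical rays $\arg z=\te_j$ split the plane into $m+2$ sectors $\Sigma_0,\dots,\Sigma_{m+1}$, each of opening $\frac{2\pi}{m+2}$, in each of which a solution is either subdominant (decaying like $\exp(-c|z|^{(m+2)/2})$) or dominant (growing like $\exp(c|z|^{(m+2)/2})$). The number of accumulation rays of the zero sequence equals $(m+2)$ minus twice the number of sectors in which the solution is subdominant; hence the hypothesis that all $m+2$ rays $\te_j$ are accumulation rays forces $B$ to be dominant in every $\Sigma_j$. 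Thus the hypothesis on $B$ yields: for each $j$ and each closed subsector $\overline{\Omega}\subset\Sigma_j$ there are $c>0$ and $r_0>0$ with $|B(z)|\ge\exp(c|z|^{(m+2)/2})$ for $z\in\overline{\Omega}$, $|z|\ge r_0$.

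Next I would produce a sector where $A$ is bounded; this is the step I expect to require the most care. If $A$ has a finite Borel exceptional value $a$, then $\lambda(A-a)<\rho(A)<\infty$, so Hadamard's factorisation gives $A-a=\Pi e^{Q}$ with $\rho(\Pi)=\lambda(A-a)$ and $\deg Q=\rho(A)\ge 1$; on any open Stokes sector of $e^{Q}$, where $\RE Q(z)\le-c_1|z|^{\deg Q}$, one gets $|A(z)-a|\le\exp(|z|^{\lambda(A-a)+\epsilon}-c_1|z|^{\deg Q})\to0$, so $A$ is bounded there. If instead $A$ is extremal for Yang's inequality, I would invoke the structure theory for such functions (the circle of results around \cite{yang}): among the $2q$ sectors cut out by the $2q$ Borel directions there are sectors in which $A$ tends to a finite deficient value of $A$, and on such a sector $A$ is again bounded. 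In either case I get a sector $S(\alpha,\beta)$ with $\beta-\alpha>0$ and $|A(z)|\le M$ for large $z\in S(\alpha,\beta)$; intersecting it with a suitable $\Sigma_j$ (only finitely many rays $\te_j$ to avoid) produces a closed subsector $\overline{\Omega}$ on which simultaneously $|A|\le M$ and $|B(z)|\ge\exp(c|z|^{(m+2)/2})$ for $|z|$ large.

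Finally I would run the contradiction and the hyper-order estimate. Assume $\rho(f)<\infty$. By Gundersen's estimates for logarithmic derivatives there is a set $E\subset(0,\infty)$ of finite logarithmic measure and a $K>0$ with $|f^{(k)}(z)/f(z)|\le|z|^{K}$ ($k=1,2$) for $|z|\notin E$; choosing $z_r\in\overline{\Omega}$ with $|z_r|=r\to\infty$, $r\notin E$, the identity $B=-f''/f-Af'/f$ forces $\exp(cr^{(m+2)/2})\le(1+M)r^{K}$, which is absurd for large $r$, so $\rho(f)=\infty$. When $A$ and $B$ are of finite order, $\rho_2(f)\le\max\{\rho(A),\rho(B)\}$ is the well-known growth bound for \eqref{sde} with finite-order coefficients, so $\rho_2(f)<\infty$; then the hyper-order version of the logarithmic-derivative estimate, $|f^{(k)}(z)/f(z)|\le\exp(|z|^{\rho_2(f)+\epsilon})$ off a set of finite logarithmic measure, confronted with $|B(z)|\ge\exp(c|z|^{(m+2)/2})$ on $\overline{\Omega}$, gives $\frac{m+2}{2}\le\rho_2(f)+\epsilon$; letting $\epsilon\to0^+$ yields $\rho_2(f)\ge\rho(B)\ge\min\{\rho(A),\rho(B)\}$. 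The main obstacle is the extremal-for-Yang case of the previous step---turning the equality $q=p/2$ into a concrete sector of boundedness for $A$---whereas the Borel-exceptional-value case is routine via Hadamard factorisation.
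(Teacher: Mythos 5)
Your proposal follows essentially the same route as the paper: the hypothesis on accumulation rays forces $B$ to blow up exponentially in every Hille sector (the paper's Lemma \ref{new}), a sector of boundedness for $A$ is extracted from the deficient-value sectors in the Yang-extremal case and from Hadamard factorisation plus decay sectors of $e^{Q}$ in the Borel-exceptional case, and Gundersen's logarithmic-derivative estimates applied to $B=-f''/f-Af'/f$ yield both the contradiction and the hyper-order bound. The only cosmetic difference is that you run a single unified hyper-order argument via $|f^{(k)}/f|\leq\exp(r^{\rho_2(f)+\epsilon})$, whereas the paper splits into the cases $\rho(A)<\rho(B)$ (citing Kwon) and $\rho(B)\leq\rho(A)$ (a direct estimate with $T(2r,f)$); both are sound.
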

	
   We illustrate the above theorem by some examples:
   
  \begin{example}\rm\label{exeyi}
   All the non-trivial solutions of the differential equation
  $$f^{''}+A(z)f^{'}+B(z)f=0,$$
  have infinite order of growth, where $B(z)$ is any non-trivial solution of $f^{''}-zf=0$, which is not a constant multiple of $\phi_{j}(z)=Ai(\alpha_{j}z)$ for some $j=1,2,3$. Here $\alpha_{j}$ is the cube root of unity and $Ai(z)$ is a special contour integral solution of $f^{''}-zf=0$, called the Airy integral and represented by
  $Ai(z)=1/2\pi\iota\int_C \exp\{(1/3)w^{3}-zw\}\,dw$, where the contour $C$ runs from $\infty$ to $0$ along $\arg w=-\pi/3$ and then $0$ to $\infty$ along $\arg w=\pi/3$. This $B(z)$ is obtained from \rm\cite{gunst}. It has exactly $1+2=3$ accumulation rays of zero sequence and $A(z)$ satisfies any one of the followings.
  \begin{enumerate}
  \item $A(z)=\int_{0}^{z}e^{-t^{2}}\,dt$ is an entire function extremal for Yang's inequality and $\rho(B)<\rho(A)$, see \rm\cite{yang}.
  \item $A(z)=e^{z^{2}}+1$ has $`1$' as a finite Borel exceptional value and $\rho(B)<\rho(A)$.
  \end{enumerate}
  \end{example}

In the final result, we consider $A(z)$ to be a transcendental entire function with a finite deficient value and $B(z)$ to be same as in Theorem \rm\ref{main1}, then we study the growth of non-trivial solutions of \eqref{sde}.

\begin{theorem}\rm\label{main3}
Suppose that $A(z)$ is a transcendental entire function with a finite deficient value and $B(z)$ is same as in Theorem \ref{main1}. Then all non-trivial solutions of equation \eqref{sde} are of infinite order.
\end{theorem}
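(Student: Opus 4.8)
The plan is to argue by contradiction. Suppose $f\not\equiv0$ solves \eqref{sde} with $\rho(f)<\infty$; I will contradict this by exploiting the equation in the form
\[
 B=-\frac{f''}{f}-A\,\frac{f'}{f}.
\]
Since $\rho(B)=\frac{m+2}{2}<\infty$, Gundersen's condition~(1) from the Introduction settles the case $\rho(A)<\rho(B)$, so I may assume $\rho(B)\le\rho(A)$; then $\rho(A)\ge\frac{m+2}{2}>\frac12$, and $\rho(A)$ is finite (built into the hypothesis, the deficient value being understood for a function of finite order).

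The first ingredient is a description of $B$. As $B$ solves \eqref{polyeq} with $\deg P=m$, it has order $\frac{m+2}{2}$, and its zero sequence accumulates only along the $m+2$ rays $\te_j=\frac{2j\pi-\arg(a_m)}{m+2}$, which split the plane into $m+2$ sectors. In each such sector $B$ is, by the classical asymptotic integration theory underlying Theorem~A and the Remark, either subdominant (exponentially small and zero-free up to the boundary) or dominant (exponentially large in the interior), and $\te_j$ is an accumulation ray of the zeros of $B$ precisely when $B$ is dominant in both sectors bordering $\te_j$. Consequently, the hypothesis that $B$ has exactly $m+2$ accumulation rays forces $B$ to be dominant in \emph{every} sector, so for each $\te\notin\{\te_0,\dots,\te_{m+1}\}$ there is a constant $c(\te)>0$ with
\[
 \log\bigl|B(re^{i\te})\bigr|\ge c(\te)\,r^{(m+2)/2}\qquad(r\to\infty).
\]
This is the step shared with Theorem~\ref{main1}, and it is the reason ``exactly $m+2$'' is the sharp hypothesis: it makes every non-critical direction a direction of maximal growth of $B$.

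The second ingredient handles $A$ via its deficient value $a$, so $\delta(a,A)>0$. Using the standard lemma on deficient values of entire functions of finite order (cf.\ \cite{yang}), I would produce a direction $\arg z=\te_0$ and a set $\Lambda\subset(1,\infty)$ of infinite logarithmic measure along which $A$ stays within bounded distance of $a$, hence $|A(re^{i\te_0})|\le M$ for all $r\in\Lambda$; moreover $\te_0$ can be taken different from the finitely many critical rays $\te_j$ of $B$ and outside the exceptional set of zero linear measure attached to Gundersen's estimates for $f$. Along $\te_0$ those estimates bound $|f'(re^{i\te_0})/f(re^{i\te_0})|$ and $|f''(re^{i\te_0})/f(re^{i\te_0})|$ by a fixed power $r^{\tau}$ for all large $r$ outside a set of finite linear measure. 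Inserting the three bounds into $|B|\le|f''/f|+|A|\,|f'/f|$ gives, for such $r\in\Lambda$,
\[
 \exp\!\bigl(c(\te_0)\,r^{(m+2)/2}\bigr)\le\bigl|B(re^{i\te_0})\bigr|\le (1+M)\,r^{\tau},
\]
which is impossible for large $r$. Hence $\rho(f)=\infty$, and since $f$ was an arbitrary non-trivial solution the theorem follows.

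The main obstacle is the second ingredient. A deficient value supplies only integrated information about how often $A$ comes close to $a$ on large circles, which is considerably weaker than the Borel exceptional value treated in Theorem~\ref{main1}; passing from this to a single direction along which $A$ is genuinely controlled — while retaining enough freedom to place that direction inside a dominant sector of $B$ and away from the exceptional sets for $f$ — is where the hypothesis on $A$ is really consumed and where this argument diverges from the proof of Theorem~\ref{main1}. By contrast, the structural picture of $B$ and the Gundersen estimates are standard inputs.
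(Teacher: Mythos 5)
Your structural analysis of $B$ (dominance in every sector because all $m+2$ rays are accumulation rays) matches the paper's Lemma \ref{new}, and the final contradiction via Gundersen's estimates is the same. But the second ingredient --- the one you yourself flag as ``the main obstacle'' --- is a genuine gap, not merely a detail to be filled in. There is no standard lemma that upgrades a deficient value $a$ of $A$ to a \emph{fixed} direction $\arg z=\te_0$ together with a set $\Lambda$ of infinite logarithmic measure on which $|A(re^{i\te_0})|\le M$. Deficiency only guarantees that for each large $r$ the proximity function $m(r,1/(A-a))$ is a positive proportion of $T(r,A)$, which yields, for each such $r$, some point $\te_r$ on the circle $|z|=r$ with $\log|A(re^{i\te_r})-a|\le-\alpha T(r,A)$; but $\te_r$ may rotate arbitrarily as $r$ grows, so no single ray need carry this control. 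Your argument as written stalls exactly at the step you identified, and the hypothesis on $A$ is never actually consumed.

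The paper closes this gap by not fixing a direction at all. It takes the $r$-dependent point $\te_r$ supplied by deficiency and applies the Kwon--Kim estimate (Lemma \ref{kwonkimlemma}) for the integrated logarithmic derivative of $A-a$ to show that $\log|A(re^{i\te})-a|$ stays $\le 0$ on a whole interval $[\te_r-\phi,\te_r+\phi]$ of \emph{fixed} angular length, for all $r$ in a set $E_\delta$ of lower logarithmic density greater than $1-\delta$. Because $B$ blows up exponentially in every one of the $m+2$ sectors, this moving interval always meets a region where $|B|$ is exponentially large, and the contradiction with $|B|\le|f''/f|+|A|\,|f'/f|$ goes through without ever pinning down a single ray. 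To repair your version, replace the fixed-ray claim with this moving-interval argument; the remainder of your outline then survives essentially unchanged.
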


The following example illustrates the above theorem:
\begin{example}\rm
The differential equation 
$$f^{''}+(e^{z^3}-1)f^{'}+B(z)f=0,$$
has every non-trivial solution of infinite order, where $B(z)$ is the same as in Example \rm\ref{exeyi} and $`-1$' is the finite deficient value of $A(z)=e^{z^3}-1$. 
\end{example}
	
	\section{\textbf{Auxiliary results}}
	In this section, we state some lemmas which are used in the proofs of the main theorems. Before stating these lemmas, first we recall some elementary notions. \\
	The linear measure of a set $E\subset[0,\infty)$ is defined by 
	$m(E)=\int_E \,dt$. \\
	The logarithmic measure, lower logarithmic density and upper logarithmic density of a set $G\subset [1,\infty)$ are defined by $$m_l(G)=\int_G \frac{1}{t}\,dt$$ 
	$$\underline{\log dens}(G)=\liminf_{r\to \infty}\frac{m_l(G\cap [1,r))}{\log r},$$
	$$\overline{\log dens}(G)=\limsup_{r\to \infty}\frac{m_l(G\cap [1,r))}{\log r},$$
	respectively. Lower logarithmic density and upper logarithmic density vary between $0$ and $1$.\\
	
	The following lemma is due to Gundersen\cite{log gg} which played a pivotal role to prove many results of complex differential equations.
	\begin{lemma} \label{gunlem}
		Let $f(z)$ be a transcendental meromorphic function and let,  $k$ and $j$ be integers such that $k > j \geq 0$. Suppose that $\epsilon > 0$ and $\alpha>1$ are given real constants. Then the  following holds:
		\begin{enumerate}
			\item there exists a set $F\subset[0,2\pi) $ with $m(F) = 0$ and there exists a constant $c>0$ that depends only on $\alpha$ and integers $j$, $k$ such that if $\phi_{0}\in{[0,2\pi)\setminus F}$, then there is a constant $R_0 = R_0(\phi_0) > 1$
			such that for all $z$ satisfying $\arg z =\phi_0$ and $|z|\geq R_0$, we have
			$$\left|\frac{f^{(k)}(z)}{f^{(j)}(z)}\right|\leq c \left( \frac{T(\alpha r,f)}{r} \log^{\alpha}{r} \log{T(\alpha r,f)} \right)^{(k-j)}.$$
			If $f(z)$ is of finite order, then $f(z)$ satisfies
			$$\left|\frac{f^{(k)}(z)}{f^{(j)}(z)}\right|\leq |z|^{(k-j)(\rho(f)-1+\epsilon)}$$
			for all $z$ satisfying $\arg z =\phi_0$ and $|z| \geq R_0$.
			
			\item  there exists a set $F\subset (1,\infty)$ with finite logarithmic measure and there exists a constant $c>0$ that depends only on $\alpha$ and integers $j$, $k$ such that
			
			$$\left| \frac{f^{(k)}(z)}{f^{(j)}(z)}\right|\leq c \left( \frac{T(\alpha r,f)}{r} \log^{\alpha}{r} \log{T(\alpha r,f)} \right)^{(k-j)}.$$
			holds for all $z$ satisfying $|z|=r\notin F\cup[0,1]$.\\
			
			If $f(z)$ is of finite order, then $f(z)$ satisfies
			$$\left|\frac{f^{(k)}(z)}{f^{(j)}(z)}\right| \leq |z|^{(k-j)(\rho(f)-1+\epsilon)}$$
			
			for all $z$ satisfying $|z|\notin F\cup [0,1]$.
		\end{enumerate}
	\end{lemma}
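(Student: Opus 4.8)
The plan is to establish the base case $k=1$, $j=0$ — an estimate for the logarithmic derivative $|f'(z)/f(z)|$ — and then to deduce the general case $k>j\ge 0$ by a telescoping argument. Once the bound is known for all logarithmic derivatives of the form $g'/g$, one writes
$$\frac{f^{(k)}}{f^{(j)}}=\prod_{i=j}^{k-1}\frac{f^{(i+1)}}{f^{(i)}}$$
and applies the base estimate to each function $g=f^{(i)}$ for $i=j,\dots,k-1$, using that each derivative $f^{(i)}$ is again transcendental meromorphic with characteristic comparable to $T(r,f)$; multiplying the $(k-j)$ factors produces the exponent $(k-j)$ in the final bound. For part (1) the relevant angular exceptional set is the finite union of the measure-zero sets attached to each factor (hence still of measure zero), and for part (2) it is a finite union of sets of finite logarithmic measure (hence still of finite logarithmic measure). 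The central analytic tool throughout is the Poisson--Jensen formula.

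First I would fix $z$ with $|z|=r$ and apply the Poisson--Jensen formula on the disc $|w|\le R$ with $R=\alpha r$, letting $a_\mu$ and $b_\nu$ denote the zeros and poles of $f$ in this disc, counted with multiplicity. Logarithmic differentiation of the formula yields the representation
$$\frac{f'(z)}{f(z)}=\frac{1}{2\pi}\int_0^{2\pi}\log|f(Re^{i\phi})|\,\frac{2Re^{i\phi}}{(Re^{i\phi}-z)^2}\,d\phi+\sum_\mu\Bigl(\frac{1}{z-a_\mu}-\frac{\overline{a_\mu}}{R^2-\overline{a_\mu}z}\Bigr)-\sum_\nu\Bigl(\frac{1}{z-b_\nu}-\frac{\overline{b_\nu}}{R^2-\overline{b_\nu}z}\Bigr).$$
The boundary integral is controlled directly: since $|z|=r<R=\alpha r$, the kernel is bounded by a constant multiple of $R/(R-r)^2=O(1/r)$, while $\int_0^{2\pi}\bigl|\log|f(Re^{i\phi})|\bigr|\,d\phi$ is dominated by $m(R,f)+m(R,1/f)=O(T(\alpha r,f))$ via the first fundamental theorem. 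This gives the leading contribution of the shape $T(\alpha r,f)/r$, with the logarithmic factors $\log^\alpha r\,\log T(\alpha r,f)$ absorbed when the thresholds below are optimized.

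The main obstacle, and the place where the exceptional set is born, is the estimation of the two sums over the zeros $a_\mu$ and poles $b_\nu$. Their total number is $n(R,f)+n(R,1/f)$, which by Jensen's formula is $O(T(\alpha R,f))$; the trouble is that a single term $1/|z-a_\mu|$ can blow up when $z$ approaches a zero or pole. To control this I would run a covering argument: one shows that the set of directions $\phi_0$ (for part (1)) or the set of radii $r$ (for part (2)) for which the point $z=re^{i\phi_0}$ falls within a prescribed small distance of some $a_\mu$ or $b_\nu$ has, respectively, zero angular measure or finite logarithmic measure. Outside this exceptional set, the distances $|z-a_\mu|$ and $|z-b_\nu|$ are bounded below by a negative power of $|z|$, so that each sum is dominated by (the point count) times that power, producing a contribution of the same order as the boundary term. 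Combining the two estimates yields the asserted inequality for $|f'/f|$, and then the telescoping product gives it for $|f^{(k)}/f^{(j)}|$.

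Finally, for the finite-order specialization I would invoke the definition of order: for every $\epsilon'>0$ one has $T(\alpha r,f)\le(\alpha r)^{\rho(f)+\epsilon'}$ for all sufficiently large $r$. Substituting this into the general bound and absorbing the factors $\alpha^{\rho(f)+\epsilon'}$, $\log^\alpha r$ and $\log T(\alpha r,f)$ into an arbitrarily small power $r^{\epsilon''}$ collapses the right-hand side to $|z|^{(k-j)(\rho(f)-1+\epsilon)}$, which is the claimed polynomial form. Part (2) follows from exactly the same scheme, the only difference being that the covering argument is carried out over radii rather than directions, so that the exceptional set has finite logarithmic measure and the estimate holds on entire circles $|z|=r\notin F\cup[0,1]$, uniformly in $\arg z$.
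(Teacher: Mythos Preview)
The paper does not give its own proof of this lemma at all: it is stated with a citation to Gundersen's 1988 paper \cite{log gg} and used as a black box. There is therefore no in-paper argument to compare your proposal against. Your outline is in fact a faithful sketch of Gundersen's original proof---Poisson--Jensen representation, direct estimate of the boundary kernel, a covering/Borel--Cantelli argument over the zero/pole set to manufacture the exceptional angular set (part (1)) or radial set (part (2)), and then passage to higher derivatives---so in that sense you have reproduced the intended argument rather than diverged from it.

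One point worth tightening: in the telescoping step you apply the base case to each $g=f^{(i)}$ and assert that its characteristic is ``comparable to $T(r,f)$''. This is true, but it is not entirely free---one needs $T(r,f^{(i)})\le C\,T(\beta r,f)$ for some $\beta>1$ outside a set of finite logarithmic measure, which itself uses the logarithmic-derivative lemma (or the standard $m(r,f'/f)=S(r,f)$ estimate) together with $N(r,f^{(i)})\le (i+1)N(r,f)$. Gundersen in fact avoids this circularity by differentiating the Poisson--Jensen formula $k-j$ times directly rather than telescoping, which keeps all the zeros/poles those of $f$ itself. Your route still closes, but you should flag where the bound on $T(r,f^{(i)})$ comes from so the argument is not self-referential.
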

	
	To state next lemma, first we need to recall notion of critical ray.
	\begin{definition}
		Suppose that $P(z)=a_nz^n+a_{n-1}z^{n-1}+...+a_0$ ;~$a_n\neq0$ and $\delta(P,\te)$=Re$(a_ne^{in\te} )$. A ray $\arg z=\te$ is called a critical ray of $e^{P(z)}$ if $\delta(P,\te)=0$.
	\end{definition}
	We fix some notations.
	$$E^{+}=\{\te\in[0,2\pi] : \delta(P,\te)\geq 0\};$$
	$$E^{-}=\{\te\in[0,2\pi] : \delta(P,\te)\leq 0\}.$$
	If $\phi<\psi$ such that $\psi-\phi<2\pi$, then
	$$S(\phi,\psi)=\{z\in\mathbb{C} : \phi<\arg z<\psi\};$$
	$$S(\phi,\psi,r)=\{z\in\mathbb{C} : \phi<\arg z<\psi,|z|<r\}.$$
	
	Critical rays of $e^{P(z)}$ divide the whole complex plane into $2n$ sectors of equal length $\pi/n$. Suppose that $\phi_i$ and $ \psi_i$ ( $1\le i\leq n$) are critical rays of $e^{P(z)}$ such that $0\leq\phi_1<\psi_1<\phi_2<\psi_2<...<\phi_n<\psi_n$ and $\phi_{n+1}=2\pi+\phi_1$. These critical rays form $2n$ disjoint sectors $S(\phi_i,\psi_i)$ and $S(\psi_i,\phi_{i+1})$ ; $1\leq i\leq n$ in which $e^{P(z)}$ satisfies $\delta(P,\te)>0$ and $\delta(P,\te)<0$, respectively.
	\begin{example}
		The function $e^{-z}$ has two critical rays at $\te=\pi/2$ and $\te=-\pi/2$. Also $E^{+}=[\pi/2,3\pi/2]$ and $E^{-}=[-\pi/2,\pi/2]$.
	\end{example}
	Now we are prepared to state next lemma which gives estimate for an entire function with integral order of growth.
	\begin{lemma}\rm{\cite{banklang}}\label{polyn}
		Let $A(z)=d(z)e^{Q(z)}$ be an entire function, where $Q(z)$ is a polynomial of degree $n\geq 1$, and $d(z)$ is an analytic function such that $\rho(d)<\rho(A)=$ deg $Q(z)$. Then for given $\epsilon>0$, there exists a set $E\subset[0,2\pi)$ with linear measure zero, such that
		\begin{enumerate}
			\item if $\te\in E^{+}\setminus E$, there exists a $R(\te)>1$ such that 
			$$|A(re^{i\te})|\geq \exp((1-\epsilon)\delta(Q,\te)r^n)$$
			holds for all $r>R(\te)$.
			\item if $\te\in E^{-}\setminus E$, there exists a $R(\te)>1$ such that
			$$|A(re^{i\te})|\leq \exp((1-\epsilon)\delta(Q,\te)r^n)$$
			holds for all $r>R(\te)$.
		\end{enumerate}
	\end{lemma}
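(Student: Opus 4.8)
The plan is to reduce everything to the elementary identity
$\log|A(re^{i\theta})| = \log|d(re^{i\theta})| + \delta(Q,\theta)r^{n} + O(r^{n-1})$
and then control the two terms on the right separately. First I would observe that since $A$ is entire and $e^{Q}$ is entire and zero-free, the factor $d = A e^{-Q}$ is itself entire, so $\rho(d)$ is well defined and $\rho(d) < n$. Writing $Q(z) = \sum_{k=0}^{n} c_k z^{k}$ with $c_n \neq 0$ and putting $z = re^{i\theta}$, the real part splits as $\RE Q(re^{i\theta}) = \RE(c_n e^{in\theta})\,r^{n} + \sum_{k=0}^{n-1}\RE(c_k e^{ik\theta})\,r^{k} = \delta(Q,\theta)\,r^{n} + O(r^{n-1})$, where the error is uniform in $\theta$. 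Since $|e^{Q(re^{i\theta})}| = \exp(\RE Q(re^{i\theta}))$, taking logarithms in $A = d\,e^{Q}$ yields the displayed identity.

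Next I would fix $\epsilon_1 > 0$ small enough that $\rho(d) + \epsilon_1 < n$ (possible because $\rho(d) < n$) and obtain two-sided control of $\log|d(re^{i\theta})|$. The upper bound is immediate from the maximum modulus: $\log|d(re^{i\theta})| \leq \log M(r,d) \leq r^{\rho(d)+\epsilon_1}$ for all large $r$, uniformly in $\theta$. The lower bound is where the work lies, and here I would invoke Lemma \ref{gunlem}(1) applied to $d$ with $k=1,\ j=0$: there is a set $F \subset [0,2\pi)$ with $m(F)=0$ such that for every $\theta \notin F$ one has $|d'(re^{i\theta})/d(re^{i\theta})| \leq r^{\rho(d)-1+\epsilon_1}$ for all $r \geq R_0(\theta)$. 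The finiteness of this ratio forces $d$ to have no zeros on the ray $\arg z = \theta$ for $|z| \geq R_0(\theta)$ (a zero would make $|d'/d|$ blow up), so $\log|d(re^{i\theta})|$ is differentiable in $r$ there with derivative $\RE(e^{i\theta} d'/d)$. Integrating along the ray from $R_0(\theta)$ to $r$ and using the bound gives $\bigl|\log|d(re^{i\theta})| - \log|d(R_0(\theta)e^{i\theta})|\bigr| \leq \int_{R_0(\theta)}^{r} t^{\rho(d)-1+\epsilon_1}\,dt \leq C\, r^{\rho(d)+\epsilon_1}$, whence $\log|d(re^{i\theta})| \geq -C'\, r^{\rho(d)+\epsilon_1}$ for $r \geq R_0(\theta)$. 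When $d$ is a polynomial Lemma \ref{gunlem} does not apply, but then $\log|d(re^{i\theta})| = O(\log r)$ beyond the moduli of its finitely many zeros, so both bounds hold trivially.

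With the exceptional set $E := F \cup \{\theta : \delta(Q,\theta) = 0\}$, the union of $F$ with the finitely many critical rays, hence still of linear measure zero, I would finish each case. For part (1), on $\theta \in E^{+}\setminus E$ we have $\delta(Q,\theta) > 0$ strictly, so the identity and the lower bound give $\log|A(re^{i\theta})| \geq \delta(Q,\theta)\,r^{n} - C'\,r^{\rho(d)+\epsilon_1} + O(r^{n-1})$; since $\rho(d)+\epsilon_1 < n$, the leading term $\delta(Q,\theta)\,r^{n}$ dominates the error terms, and for $r$ large it exceeds $(1-\epsilon)\delta(Q,\theta)\,r^{n}$. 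For part (2), on $\theta \in E^{-}\setminus E$ we have $\delta(Q,\theta) < 0$, and the identity with the maximum-modulus upper bound gives $\log|A(re^{i\theta})| \leq \delta(Q,\theta)\,r^{n} + r^{\rho(d)+\epsilon_1} + O(r^{n-1}) \leq (1-\epsilon)\delta(Q,\theta)\,r^{n}$ once $r$ is large, because $(1-\epsilon)\delta(Q,\theta)\,r^{n} - \delta(Q,\theta)\,r^{n} = \epsilon\,|\delta(Q,\theta)|\,r^{n}$ grows like $r^{n}$ and absorbs the $o(r^{n})$ remainder. The main obstacle throughout is the lower estimate for $|d|$; the key point that makes it routine is that Gundersen's logarithmic-derivative bound simultaneously yields a small bound on $d'/d$ and rules out zeros of $d$ on the admissible rays, so a single integration converts it into the desired lower bound on $\log|d|$.
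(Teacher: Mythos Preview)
The paper does not supply its own proof of this lemma: it is quoted verbatim as an auxiliary result from Bank--Laine--Langley \cite{banklang} and used as a black box in the proof of Theorem~\ref{main1}(2). Consequently there is no in-paper argument to compare your proposal against.

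That said, your argument is correct. The decomposition $\log|A(re^{i\theta})|=\log|d(re^{i\theta})|+\delta(Q,\theta)r^{n}+O(r^{n-1})$ is the natural starting point, and the only nontrivial step---a lower bound for $\log|d|$ along a ray---you handle by integrating the Gundersen estimate $|d'/d|\le r^{\rho(d)-1+\epsilon_1}$ from Lemma~\ref{gunlem}(1). Your observation that the validity of this pointwise bound on an entire ray segment already excludes zeros of $d$ there (so the integration is legitimate) is the key point, and it is sound. The polynomial case for $d$ is disposed of correctly.

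For context, the more traditional route to the lower bound on $|d|$ (and the one implicit in the cited source) is to invoke a minimum-modulus theorem for entire functions of finite order: one shows directly that $\log|d(z)|\ge -r^{\rho(d)+\epsilon}$ for $z$ outside a thin exceptional set (small discs around the zeros, or an angular set of measure zero). Your approach via the logarithmic derivative is a legitimate and self-contained alternative that stays within the toolkit already assembled in the paper; it trades a classical growth lemma for an integration argument, at the cost of a slightly more delicate justification but with the benefit of not importing any result beyond Lemma~\ref{gunlem}.
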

	\begin{remark}
		From the above lemma, we mention a clear notion of $E^+$ and $E^-$ which will be used in the proof of main theorems.\\
		$$E^{+}=\bigcup\limits_{i=1}^{i=n}(\phi_i,\psi_i), \qquad \qquad E^{-}=\bigcup\limits_{i=1}^{i=n}(\psi_i,\phi_{i+1}).$$
	\end{remark}
	
	To state next lemma, we first fix some notations.\\
	Let $0\leq\alpha<\beta<2\pi$ and $S(\alpha,\beta)=\{z : \alpha<\arg z<\beta\}$ be a sector. $\overline{S}$ denotes closure of $S$. Suppose that $f(z)$ is an entire function of finite order $\rho(f)\in(0,\infty)$. We say that $f(z)$ blows up exponentially in $\overline{S}$ if
	$$\lim_{r\to\infty}\frac{\log\log
		|f(re^{i\te})|}{\log r}=\rho(f)$$
	holds for any $\te\in(\alpha,\beta)$.  We also say that $f(z)$ decays to zero exponentially in $\overline{S}$ if
	$$\lim_{r\to\infty}\frac{\log\log
		|f(re^{i\te})|^{-1}}{\log r}=\rho(f)$$
	holds for any $\te\in(\alpha,\beta)$.\\
	
	Now we state next lemma which was originally given by Hille \rm{\cite{hille}}, one can also find in \cite{shin, jlongfab}. This lemma plays an important role to prove our results. 
	\begin{lemma}\label{hillelemma}
		Suppose that $w$ is a non-trivial solution of \eqref{polyeq}. Set  $S_j=S(\te_j,\te_{j+1})$, where $\te_j=\frac{2\pi j-\arg (a_m)}{m+2} ~; 0\leq j\leq m+1$. Then $w$ satisfies the following properties:
		\begin{enumerate}
			\item In each sector $S_j$, $w$ either blows up or decays to zero exponentially.
			\item If $w$ decays to zero in $S_j$, for some $j$, then it must blow up in $S_{j-1}$ and $S_{j+1}$. However, it is possible for $w$ to blow up in many adjacent sectors.
			\item If $w$ decays to zero in $S_j$, then $w$ has at most finitely many zeros in any
			closed sub-sector within $S_{j-1}\cup\overline{S_j}\cup S_{j+1}$.
			\item If $w$ blows up in $S_{j-1}$ and $S_j$, then for each $\epsilon>0$, $w$ has infinitely many zeros in each sector $\overline{S}(\te_j-\epsilon,\te_j+\epsilon)$.
		\end{enumerate}
	\end{lemma}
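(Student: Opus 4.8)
The plan is to run the classical Liouville--Green (WKB) asymptotic integration of \eqref{polyeq} and to read off all four assertions from the sign of a single phase function. I would start from the fact that, since $P(z)=a_mz^m+\cdots$ has degree $m$, every non-trivial solution $w$ is entire of order $\rho(w)=(m+2)/2=:\rho$, which is exactly the exponent occurring in the definitions of ``blows up'' and ``decays''. For $|z|$ large $P$ is zero-free, so I would set $\xi(z)=\int_{z_0}^{z}\sqrt{P(t)}\,dt$ and $u=P^{1/4}w$; this substitution converts \eqref{polyeq} into $\ddot u+(1+\eta(z))u=0$ with $\eta(z)\to0$, so that \eqref{polyeq} possesses, sectorially, a fundamental system
\[
w_{\pm}(z)=P(z)^{-1/4}e^{\pm i\xi(z)}\bigl(1+o(1)\bigr),\qquad z\to\infty .
\]
The growth of $w_\pm$ along $\arg z=\te$ is controlled by $\RE(\pm i\xi)$, and computing the leading term gives
\[
\RE\bigl(i\xi(re^{i\te})\bigr)=-\frac{2|a_m|^{1/2}}{m+2}\,r^{(m+2)/2}\sin\!\Bigl(\tfrac{(m+2)\te+\arg(a_m)}{2}\Bigr)+o\bigl(r^{(m+2)/2}\bigr).
\]
The sine factor vanishes precisely on the rays $\arg z=\te_j$, so these are the boundaries of the $S_j$, and it keeps a constant nonzero sign on each open sector $S_j$.

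Assertion (1) would then be immediate: on $S_j$ the phase $\RE(i\xi)$ has one fixed sign, so one of $w_+,w_-$ grows like $\exp(c(\te)r^{\rho})$ and the other decays like $\exp(-c(\te)r^{\rho})$ with $c(\te)>0$ for every interior $\te$. Writing a non-trivial solution $w=c_+w_++c_-w_-$, I would argue that if the coefficient of the dominant exponential is nonzero then $w$ blows up throughout $S_j$, and otherwise $w$ is a scalar multiple of the subdominant solution and decays there; either way $\lim_{r\to\infty}\log\log|w(re^{i\te})|^{\pm1}/\log r=\rho$, matching the definition. For (2) I would observe that crossing a boundary ray $\te_j$ flips the sign of the sine, hence of $\RE(i\xi)$, so the dominant and subdominant exponentials swap roles between $S_{j-1}$, $S_j$, $S_{j+1}$; a solution subdominant (decaying) in $S_j$ therefore carries a nonzero coefficient on the exponential that is dominant in both $S_{j-1}$ and $S_{j+1}$, and so blows up there. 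Because the common rays $\te_j$ are the directions on which $w_+$ and $w_-$ have equal modulus, the dominant coefficient does not jump across them (no Stokes phenomenon), so the argument is clean; the parenthetical remark is explained by noting that a generic solution, with both coefficients nonzero, blows up in every sector.

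For the zero-distribution statements I would exploit the same representation more carefully. For (3), the solution decaying in $S_j$ has a fixed asymptotic form $w=P^{-1/4}e^{\mp i\xi}(1+o(1))$ which, by the Stokes geometry of \eqref{polyeq}, persists on the enlarged sector $S_{j-1}\cup\overline{S_j}\cup S_{j+1}$ of opening $3\cdot\frac{2\pi}{m+2}$; since $P^{-1/4}e^{\mp i\xi}$ is zero-free for large $|z|$ and the factor $1+o(1)$ stays bounded away from $0$, $w$ can have only finitely many zeros in any closed sub-sector of that region. For (4), if $w$ blows up in both $S_{j-1}$ and $S_j$ then it has nonzero coefficients on the two exponentials $w_+,w_-$ dominant in $S_{j-1}$ and in $S_j$ respectively; these have equal modulus exactly on the common ray $\te_j$, where $\xi$ is to leading order real. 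Hence near $\arg z=\te_j$ one has
\[
w(z)=P(z)^{-1/4}\bigl(c_+e^{i\xi(z)}+c_-e^{-i\xi(z)}\bigr)\bigl(1+o(1)\bigr),\qquad c_\pm\neq0,
\]
a genuine oscillation whose exponential sum vanishes whenever $e^{2i\xi}=-c_-/c_+$, i.e.\ along a sequence tending to infinity inside every $\overline S(\te_j-\epsilon,\te_j+\epsilon)$; comparing $w$ with this model through Rouch\'e's (or Hurwitz's) theorem would then yield infinitely many genuine zeros of $w$ in each such sector.

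The hard part is entirely analytic rather than combinatorial: one must bound the error terms in the Liouville--Green approximation uniformly up to the sector boundaries and across the rays $\te_j$, and, for (3) and (4), pin down the exact opening of the sector on which a single asymptotic representation survives before the Stokes phenomenon forces a change. These are precisely the estimates furnished by Hille's asymptotic integration theory, and once they are available all four statements collapse to the elementary sign analysis of $\sin\!\bigl(\tfrac{(m+2)\te+\arg(a_m)}{2}\bigr)$ carried out above; accordingly I would import those estimates from \cite{hille} (see also \cite{shin,jlongfab}).
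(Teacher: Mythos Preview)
The paper does not prove this lemma at all: it is stated as an auxiliary result and immediately attributed to Hille \cite{hille} (with further references to \cite{shin,jlongfab}), with no argument given. Your proposal therefore goes well beyond what the paper does, supplying a detailed sketch of the classical Liouville--Green/WKB asymptotic integration that underlies Hille's result. The outline is substantially correct --- the phase computation identifying the critical rays $\te_j$, the dominance/subdominance dichotomy giving (1) and (2), the zero-free representation on the enlarged sector for (3), and the oscillation/Rouch\'e argument for (4) are all the right ideas --- and you yourself acknowledge that the uniform error bounds and the precise Stokes geometry must ultimately be imported from \cite{hille}. In that sense your proposal and the paper converge on the same endpoint (citing Hille), but you have additionally explained \emph{why} the cited result holds.
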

The following lemma gives estimate for a meromorphic function.
\begin{lemma}\rm\cite{kwonkim}\label{kwonkimlemma}
Suppose that $f(z)$ is a meromorphic function of finite order $\rho$. Then for the given $\delta>0$ and $0<l<1/2$, there exists a constant $\kappa(\rho,\delta)$ and a set $E_{\delta}\subset[0,\infty)$ of lower logarithmic density greater than $1-\delta$ such that for all $r\in E_{\delta}$ and for every interval $I$ of length $l$, we have
$$r\int_{I}\left|\frac{f{'}(re^{\iota\theta})}{f(re^{\iota\theta})}\right|\,d\theta<\kappa(\rho,\delta)(l\log\frac{1}{l})T(r,f).$$ 
\end{lemma}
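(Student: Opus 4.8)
The plan is to derive the estimate directly from the Poisson--Jensen representation of the logarithmic derivative. Writing $z=re^{i\theta}$ and fixing the outer radius $R=2r$, differentiation of the Poisson--Jensen formula gives
\[
\frac{f'(z)}{f(z)}=\frac{1}{2\pi}\int_0^{2\pi}\frac{2Re^{i\phi}}{(Re^{i\phi}-z)^2}\log|f(Re^{i\phi})|\,d\phi+\sum_{|a_\mu|<R}\left(\frac{1}{z-a_\mu}+\frac{\overline{a_\mu}}{R^2-\overline{a_\mu}z}\right)-\sum_{|b_\nu|<R}\left(\frac{1}{z-b_\nu}+\frac{\overline{b_\nu}}{R^2-\overline{b_\nu}z}\right),
\]
where $a_\mu$ and $b_\nu$ run over the zeros and poles of $f$ in $|z|<R$. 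I would bound the three groups of terms separately, in each case multiplying by $r$ and integrating in $\theta$ over an arbitrary arc $I$ of length $l$.

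For the boundary integral, on $|z|=r=R/2$ the kernel obeys $\bigl|2Re^{i\phi}/(Re^{i\phi}-z)^2\bigr|\le C/r$, so this term is at most $(C/r)\cdot\frac{1}{2\pi}\int_0^{2\pi}\bigl|\log|f(Re^{i\phi})|\bigr|\,d\phi\le (C'/r)\,T(2r,f)$, using $\bigl|\log|f|\bigr|=\log^+|f|+\log^+|1/f|$ and the first fundamental theorem. After multiplying by $r$ and integrating over $I$ this contributes $O(l\,T(2r,f))$. The reflected Blaschke-type terms $\overline{a_\mu}/(R^2-\overline{a_\mu}z)$ are each bounded by $1/r$ on $|z|=r$, and their number is $n(R,f)+n(R,1/f)=O(T(4r,f))$ by Jensen's formula, so they too contribute $O(l\,T(4r,f))$ after the same operations.

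The heart of the argument is the principal sum $\sum 1/(z-a_\mu)$ together with its pole analogue. Here I would interchange summation and integration and estimate, for each fixed $a_\mu=\rho_\mu e^{i\psi_\mu}$, the single-arc integral $r\int_I d\theta/|re^{i\theta}-a_\mu|$ via $|re^{i\theta}-a_\mu|^2=(r-\rho_\mu)^2+4r\rho_\mu\sin^2\tfrac{\theta-\psi_\mu}{2}$. Integrating this inverse-distance kernel across an arc of length $l$ and summing the contributions against the zero/pole counting function is what produces the factor of the shape $l\log(1/l)$: the $\log$ reflects the integrable logarithmic singularity of $1/|re^{i\theta}-a_\mu|$ accumulated over directions where zeros cluster, while the extra $l$ is the length of the arc over which this logarithmic singularity is integrated. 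Summing over the $O(T(2r,f))$ zeros and poles then yields $\kappa\,l\log(1/l)\,T(2r,f)$, after which I would pass from $T(2r,f)$ to $T(r,f)$ using the finite-order regularity estimate $T(2r,f)\le C(\rho)\,T(r,f)$, valid outside a set of small logarithmic measure.

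The main obstacle---and the reason the conclusion holds only for $r$ in a set $E_\delta$ of lower logarithmic density exceeding $1-\delta$, rather than for all $r$---lies in controlling the principal sum uniformly in the arc $I$. If a cluster of zeros or poles has modulus close to $r$, the corresponding local integrals become large, so one must discard the radii $r$ lying too near the moduli $\{|a_\mu|\}$ and $\{|b_\nu|\}$. The delicate step is to quantify exactly how much separation is needed to keep the summed contribution of size $O(l\log(1/l)\,T(2r,f))$, and then to verify, using the finite-order bound $n(r)=O(r^{\rho+\varepsilon})$, that the union of the discarded radius-neighborhoods together with the exceptional set arising from $T(2r,f)\le C(\rho)T(r,f)$ has upper logarithmic density at most $\delta$. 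This bookkeeping is precisely what fixes both the constant $\kappa(\rho,\delta)$ and the density threshold $1-\delta$.
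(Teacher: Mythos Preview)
The paper does not contain a proof of this lemma at all: it is quoted verbatim from \cite{kwonkim} (Kwon--Kim, Kodai Math.\ J.\ 24 (2001)) and used as a black box in the proof of Theorem~\ref{main3}. There is therefore nothing in the present paper to compare your argument against.

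That said, your sketch is broadly faithful to how the original Kwon--Kim result (and its antecedents, going back to Fuchs) is actually proved: differentiate the Poisson--Jensen formula with outer radius $R=2r$, bound the boundary and reflected-Blaschke contributions trivially by $O(l\,T(2r,f))$, and isolate the principal sum $\sum 1/(z-a_\mu)$ as the source of the $l\log(1/l)$ factor. Two points deserve more care than you give them. First, the passage from $T(2r,f)$ to $T(r,f)$ is not in general available outside an exceptional set merely because $f$ has finite order; what one actually uses is a Borel-type lemma guaranteeing $T(2r,f)\le r^{\varepsilon}T(r,f)$ off a set of small upper logarithmic density, and the constant then absorbs $r^{\varepsilon}$ via the finite-order hypothesis. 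Second, and more substantively, the exceptional set in the Kwon--Kim lemma does \emph{not} arise from excising annuli around the moduli of individual zeros and poles as you suggest; rather, one estimates $r\int_I|re^{i\theta}-a_\mu|^{-1}\,d\theta$ uniformly for \emph{all} $a_\mu$ (the bound $\lesssim\log(1/l)$ holds regardless of how close $|a_\mu|$ is to $r$, since the angular integral of $|re^{i\theta}-a_\mu|^{-1}$ over an arc of length $l$ is at worst logarithmic), and the density restriction comes solely from controlling the counting function $n(2r)$ against $T(r,f)$. Your mechanism for generating $E_\delta$ is thus not the right one, though the final statement you would arrive at is the same.
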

	
	Next lemma gives an upper bound for hyper-order of growth of every solution $f$ of \eqref{sde}.
	
	\begin{lemma}\rm{\cite{cz}}\label{hyperless}
		Suppose that A(z) and B(z) are entire functions of finite order. Then each solution $f$ of \eqref{sde} satisfies
		$$\limsup_{r\to \infty}\frac{\log\log T(r,f)}{\log r}\leq\max\{ \rho(A), \rho(B)\}.$$
	\end{lemma}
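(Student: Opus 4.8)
The plan is to bound the growth of an arbitrary solution $f$ directly through the Wiman--Valiron method, which turns the differential equation into a pointwise algebraic relation for the central index $\nu(r,f)$. Recall that the hyper-order of an entire function can be read off from its central index via
$$\rho_2(f)=\limsup_{r\to\infty}\frac{\log\log\nu(r,f)}{\log r},$$
and that, by the standard comparison $T(r,f)\le\log^{+}M(r,f)\le 3\,T(2r,f)$ for entire functions, this coincides with the quantity $\limsup_{r\to\infty}\log\log T(r,f)/\log r$ appearing in the statement (and defining $\rho_2(f)$ in the introduction). Hence it suffices to produce, for each $\epsilon>0$, an upper estimate of the shape $\nu(r,f)\le C' r\exp(r^{\alpha+\epsilon})$, where $\alpha=\max\{\rho(A),\rho(B)\}$.

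First I would invoke Wiman--Valiron theory: there is a set $E\subset(1,\infty)$ of finite logarithmic measure such that, for every $r\notin E$, a point $z_r$ with $|z_r|=r$ and $|f(z_r)|=M(r,f)$ satisfies
$$\frac{f^{(j)}(z_r)}{f(z_r)}=\left(\frac{\nu(r,f)}{z_r}\right)^{j}\bigl(1+o(1)\bigr),\qquad j=1,2,$$
as $r\to\infty$. Dividing \eqref{sde} by $f$ and evaluating at $z_r$ then yields
$$\left(\frac{\nu(r,f)}{z_r}\right)^{2}\bigl(1+o(1)\bigr)+A(z_r)\,\frac{\nu(r,f)}{z_r}\bigl(1+o(1)\bigr)+B(z_r)=0.$$

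Next I would insert the elementary growth bounds for finite order entire functions: given $\epsilon>0$, for all large $r$ one has $|A(z_r)|\le\exp(r^{\rho(A)+\epsilon})$ and $|B(z_r)|\le\exp(r^{\rho(B)+\epsilon})$, both dominated by $\exp(r^{\alpha+\epsilon})$. Taking moduli in the displayed relation and multiplying through by $r^{2}$ produces a quadratic inequality
$$\nu(r,f)^{2}\le C\,\nu(r,f)\,r\,\exp(r^{\alpha+\epsilon})+C\,r^{2}\exp(r^{\alpha+\epsilon}),$$
valid for $r\notin E$, whose resolution gives $\nu(r,f)\le C' r\exp(r^{\alpha+\epsilon})$. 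Taking logarithms twice, dividing by $\log r$, and letting $r\to\infty$ through the complement of $E$ yields $\rho_2(f)\le\alpha+\epsilon$; letting $\epsilon\to 0$ closes the argument.

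The step I expect to demand the most care is the handling of the exceptional set $E$ together with the $(1+o(1))$ factors. Since $E$ has finite logarithmic measure, its complement is unbounded and carries the full $\limsup$, so restricting $r$ to $(1,\infty)\setminus E$ costs nothing; one must only verify that the $o(1)$ terms from Wiman--Valiron are harmless in the quadratic rearrangement, which they are because they can be absorbed into the constant $C$ for large $r$. A secondary point worth spelling out is the passage between the central-index formulation of $\rho_2(f)$ and the characteristic-function formulation in the statement, where the comparison $T(r,f)\le\log^{+}M(r,f)\le 3\,T(2r,f)$ is exactly what is needed.
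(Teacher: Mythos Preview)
The paper does not supply its own proof of this lemma; it is quoted verbatim from Chen \cite{cz} and used as a black box in the proof of Theorem~\ref{main1}. Your Wiman--Valiron argument is correct and is in fact the standard route by which such hyper-order upper bounds are obtained in the literature, including in the cited source: substitute the central-index asymptotics at a maximum-modulus point into the equation, bound $|A|$ and $|B|$ by $\exp(r^{\alpha+\epsilon})$, and read off $\nu(r,f)\le C'r\exp(r^{\alpha+\epsilon})$.

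Two small points are worth tightening. First, Wiman--Valiron presupposes that $f$ is transcendental; you should dispose of polynomial (and identically zero) solutions separately, which is trivial since then $\rho_2(f)=0$. Second, the sentence ``its complement is unbounded and carries the full $\limsup$'' is not quite enough on its own: unboundedness of $(1,\infty)\setminus E$ does not in general force the restricted and unrestricted $\limsup$'s to agree. What makes it work here is that $\nu(r,f)$ (equivalently $T(r,f)$ or $\log M(r,f)$) is non-decreasing: for large $r$ one can always find $r'\in[r,2r]\setminus E$ because $E$ has finite logarithmic measure, and then $\nu(r,f)\le\nu(r',f)\le C'r'\exp\bigl((r')^{\alpha+\epsilon}\bigr)$ still yields $\log\log\nu(r,f)\le(\alpha+\epsilon)\log r+O(1)$. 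With these two remarks added, the argument is complete.
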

	
	The following lemma provides lower bound for hyper-order of growth of non-trivial solution $f$ of \eqref{sde}.
	\begin{lemma}\rm{\cite{kwon}}\label{hypergreater}
		Suppose that $A(z)$ and $B(z)$ are entire functions with $\rho(A)<\rho(B)$ or $\rho(B)<\rho(A)<1/2$. Then every non-trivial solution $f$ of \eqref{sde} satisfies
		$$\limsup_{r\to \infty}\frac{\log\log T(r,f)}{\log r}\geq\max\{\rho(A),\rho(B)\}.$$	
	\end{lemma}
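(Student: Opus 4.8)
The plan is to treat the two hypotheses $\rho(A)<\rho(B)$ and $\rho(B)<\rho(A)<1/2$ separately, since in the first case the zero--order coefficient $B$ dominates while in the second the first--order coefficient $A$ dominates, and the mechanisms forcing $T(r,f)$ to grow hyper-exponentially are different. In both cases the target is the same: to produce an unbounded set of radii $r$ along which $\log T(r,f)\geq c\,r^{\mu-\epsilon}$, where $\mu=\max\{\rho(A),\rho(B)\}$, since this immediately gives $\log\log T(r,f)\geq(\mu-\epsilon)\log r+O(1)$, hence $\rho_2(f)\geq\mu-\epsilon$, and letting $\epsilon\to 0$ finishes the proof.

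For the case $\rho(A)<\rho(B)$, I would rewrite \eqref{sde} as $B=-\tfrac{f''}{f}-A\tfrac{f'}{f}$ and pass to proximity functions. Since $A,B$ are entire we have $T(r,A)=m(r,A)$ and $T(r,B)=m(r,B)$, and the Nevanlinna lemma on the logarithmic derivative (the integrated counterpart of Lemma \ref{gunlem}) yields $m(r,f'/f)+m(r,f''/f)=O(\log r+\log T(r,f))$ for $r$ outside a set $E$ of finite linear measure. This produces $T(r,B)\leq T(r,A)+O(\log r+\log T(r,f))$ for $r\notin E$. Choosing $\epsilon$ with $\rho(A)+\epsilon<\rho(B)-\epsilon$ and using the monotonicity of $T(r,B)$ to ensure that $\{r:T(r,B)\geq\tfrac12 r^{\rho(B)-\epsilon}\}$ has infinite measure (so that it meets the complement of $E$), I obtain $\tfrac14 r^{\rho(B)-\epsilon}\leq O(\log T(r,f))$ along an unbounded sequence with $r\notin E$, which is exactly the required lower bound with $\mu=\rho(B)$.

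For the case $\rho(B)<\rho(A)<1/2$, the hypothesis $\rho(A)<1/2$ becomes essential. The idea is to exploit the classical minimum modulus ($\cos\pi\rho$) theorem: since $\rho(A)<1/2$ we have $\cos\pi\rho(A)>0$, so one obtains a set $H\subset[1,\infty)$ of positive lower logarithmic density on which $\log|A(z)|\geq(\cos\pi\rho(A)-\epsilon)\log M(r,A)$ uniformly over the whole circle $|z|=r$, which can be packaged (as is standard in this literature) into $\log|A(z)|\geq c\,r^{\rho(A)-\epsilon}$ uniformly on $|z|=r$ for $r\in H$. Since $\rho(B)<\rho(A)$ one also has $|B(z)|\leq\exp(r^{\rho(B)+\epsilon})$, so $|A|$ dominates $|B|$ uniformly on these circles. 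Rearranging \eqref{sde} as $A=-\tfrac{f''}{f'}-B\tfrac{f}{f'}$ and estimating $|f''/f'|$ by Lemma \ref{gunlem} applied to $f'$ (using $\log T(r,f')\leq\log T(r,f)+O(1)$), the uniform lower bound on $|A|$ is meant to propagate to a hyper-exponential lower bound on $T(r,f)$, giving $\rho_2(f)\geq\rho(A)$.

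The main obstacle is controlling the term $B\,\tfrac{f}{f'}$ in the second case: the factor $f/f'$ is the reciprocal of a logarithmic derivative and so is not bounded above by Lemma \ref{gunlem}. When $|f''/f'|$ carries the lower bound coming from $|A|$, the conclusion is immediate; but in the complementary regime the bound is absorbed into $f/f'$, which forces $|f'/f|$ to be extremely small uniformly on the circles $|z|=r$, $r\in H$. I would dispatch this sub-case by combining the argument principle (a uniformly small $|f'/f|$ on $|z|=r$ makes $n(r,1/f)$ vanish, so $f$ would be zero-free on a set of circles of positive logarithmic density) with the averaged control of $f'/f$ provided by Lemma \ref{kwonkimlemma}, showing that this regime cannot persist and that the lower bound must therefore be delivered by the $f''/f'$ term. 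Making the transition between these two regimes quantitative, uniformly along $H$, is the delicate point of the whole argument.
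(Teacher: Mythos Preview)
The paper does not prove this lemma at all: it is quoted verbatim from Kwon \cite{kwon} and simply invoked as a black box in the proof of Theorem~\ref{main1}. There is therefore no ``paper's own proof'' to compare your proposal against.

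On the substance of your sketch: your treatment of the case $\rho(A)<\rho(B)$ is the standard one and is fine. Your treatment of the case $\rho(B)<\rho(A)<1/2$, however, has a real gap that you yourself flag. Rewriting the equation as $A=-f''/f'-B\,f/f'$ and then trying to control the non-logarithmic-derivative term $f/f'$ by a dichotomy (``either $f''/f'$ carries the bound, or $f'/f$ is uniformly tiny on whole circles, and then $f$ is zero-free on those circles'') does not close: zero-freeness of $f$ on a set of circles of positive logarithmic density neither contradicts anything nor produces the required lower bound on $T(r,f)$, and Lemma~\ref{kwonkimlemma} gives only an \emph{upper} bound on the averaged logarithmic derivative, which goes in the wrong direction for this purpose. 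Kwon's original argument avoids this entirely by using Wiman--Valiron theory: at a maximum-modulus point $z_r$ one has $f^{(k)}(z_r)/f(z_r)\sim(\nu(r)/z_r)^k$, and substituting into \eqref{sde} together with the uniform lower bound $|A(z_r)|\geq \exp(c\,r^{\rho(A)-\epsilon})$ from the $\cos\pi\rho$ theorem forces $\nu(r)$, and hence $\log T(r,f)$, to grow like $\exp(c\,r^{\rho(A)-\epsilon})$. That route sidesteps the $f/f'$ term altogether and is what you should use for the second case.
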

	
	\section{\textbf{Proof of theorems}}
	
	Before giving the proof of main results, we first state and prove an auxiliary result which would be required to prove our main theorems. 
	
	\begin{lemma}\label{new}
		Let $w$ be a non-trivial solution of \eqref{polyeq} such that the number of accumulation rays of zeros sequence of $w(z)$  are exactly $m+2$. Then, $w(z)$ blows up exponentially in each sector $S_j=S(\te_j,\te_{j+1})$, where $\te_j=\frac{2\pi j-\arg (a_m)}{m+2} ~; 0\leq j\leq m+1$.
	\end{lemma}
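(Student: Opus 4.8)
The plan is to argue by contradiction, combining Hille's qualitative description of the solutions of \eqref{polyeq} (Lemma \ref{hillelemma}) with the observation that, under the present hypothesis, \emph{every} ray $\arg z=\te_j$ with $0\le j\le m+1$ must be an accumulation ray of the zero sequence of $w$.

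First I would record two preliminary facts. Since $P(z)$ has degree $m\ge 1$, the solution $w$ is transcendental and, by the classical theory for \eqref{polyeq}, of order $\rho(w)=(m+2)/2>0$. Moreover, by the remark in the Introduction recording that the accumulation rays of a non-trivial solution of \eqref{polyeq} form a subset of $\{\te_j:0\le j\le m+1\}$, a set with exactly $m+2$ elements, the hypothesis that $w$ has exactly $m+2$ accumulation rays forces $\lambda_{\te_j}(w)=\rho(w)$ for \emph{every} $j\in\{0,1,\dots,m+1\}$.

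By Lemma \ref{hillelemma}(1), in each sector $S_j$ the function $w$ either blows up or decays to zero exponentially, so it suffices to rule out the decay alternative. Suppose, for a contradiction, that $w$ decays to zero exponentially in some sector $S_{j_0}$. Then Lemma \ref{hillelemma}(3) tells us that $w$ has at most finitely many zeros in every closed sub-sector contained in $S_{j_0-1}\cup\overline{S_{j_0}}\cup S_{j_0+1}=S(\te_{j_0-1},\te_{j_0+2})$. Choosing $\epsilon>0$ small enough that $\overline{S}(\te_{j_0}-\epsilon,\te_{j_0}+\epsilon)\subset S(\te_{j_0-1},\te_{j_0+2})$, the counting function $n\bigl(S(\te_{j_0}-\epsilon,\te_{j_0}+\epsilon),w=0\bigr)$ stays bounded in $r$, so $\lambda_{\te_{j_0},\epsilon}(w)=0$ and hence $\lambda_{\te_{j_0}}(w)=0<\rho(w)$. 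Thus $\te_{j_0}$ is not an accumulation ray of the zero sequence of $w$, which contradicts the conclusion of the preceding paragraph. Therefore $w$ blows up exponentially in every sector $S_j$, which is the assertion of the lemma.

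The argument is purely a counting argument built on the qualitative properties in Lemma \ref{hillelemma}, and involves no delicate estimates. The one place to be a little careful is the bookkeeping in Lemma \ref{hillelemma}(3): when $w$ decays in $S_{j_0}$, that lemma controls the zeros of $w$ only inside the three-sector window $S(\te_{j_0-1},\te_{j_0+2})$, but this window already contains a full angular neighbourhood of $\te_{j_0}$, and that is all that is needed to eliminate one accumulation ray and obtain the contradiction.
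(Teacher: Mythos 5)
Your proposal is correct and follows essentially the same route as the paper: assume decay in some sector $S_{j_0}$, invoke Lemma \ref{hillelemma}(3) to get only finitely many zeros near the ray $\arg z=\te_{j_0}$, and contradict the fact that all $m+2$ rays $\te_j$ must be accumulation rays. Your version is in fact slightly more careful than the paper's, since you make explicit both why every $\te_j$ is an accumulation ray (the accumulation rays form a subset of an $(m+2)$-element set) and why a small closed sector around $\te_{j_0}$ lies inside the three-sector window.
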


\begin{proof}
 Suppose that there exist a sector $S_j$ in which $w(z)$ decays to zero exponentially. Then using Lemma [\ref{hillelemma}], $w(z)$ has at most finitely many zeros in any closed sub-sector within $S_{j-1}\cup\overline{S_j}\cup S_{j+1}$. But for the ray $\arg z=\te_j$ we have, $\lambda_{\te_j}(f)=\rho(f)$. This implies that there are infinite number of zeros clustering around the ray $\arg z=\te_j$. Which is a contradiction. Hence, $w(z)$ blows up exponentially in each sector $S_j.$\\
\end{proof}	
	
 \begin{proof}[\underline{Proof of Theorem \rm{\ref{main1}}}]
 Let $f$ be a non-trivial solution of finite order which is contrary to the assertion. We target to prove theorem by contradiction.
		
\begin{enumerate}
\item	Suppose that $A(z)$ is an entire function extremal for Yang's inequality and let, $A(z)$ has $q$ finite deficient values say,  $b_1,b_2,...,b_q$. Then, $A(z)$ has $2q$ Borel directions say, $\phi_1,\phi_2,...,\phi_{2q}$ which divides whole complex plane into $2q$ sectors say, $\Omega_j(\phi_j,\phi_{j+1})$ where $1\leq j\leq 2q$  and $\phi_{2q+1}=\phi_1+2\pi$.\\
As $A(z)$ is an extremal for Yang's inequality, so for the alternative sectors say, $\Omega_1,\Omega_3,...,\Omega_{2q-1}$, there exists $\phi\in(\phi_j,\phi_{j+1})$ ;  $j=1,3,...,2q-1$, such that $A(z)$ satisfies
		
$$\limsup_{r\to\infty}\frac{\log\log|A(re^{i\phi})|}{\log r}=\rho(A)$$
		
and for the remaining sectors $\Omega_j$, for every deficient value $b_j$, where $j=1,2,...,q$, there exists a corresponding sector domain $\Omega_j ; ~j=2,4,...,2q$ such that
$$\log\frac{1}{|A(z)-b_j|}>C(\phi_{j},\phi_{j+1},\epsilon,\delta(b_j,A))T(r,A)$$
holds for $z\in \Omega(\phi_{j}+\epsilon,\phi_{j+1}-\epsilon,r_0,\infty)$, where $C(\phi_{j},\phi_{j+1},\epsilon,\delta(b_j,A))$
is a constant depending on $\phi_{j},\phi_{j+1},\epsilon$ and $\delta(b_j,A)$. For simplicity we can denote it by $C$.\\
Without loss of generality, corresponding to a finite deficient value $b_{j_0}$  we can take a sector $\Omega_{2i} ~;1\leq i\leq q$ such that 
		\begin{equation}\label{yeq}
			\log\frac{1}{|A(z)-b_{j_0}|}>C T(r,A)
		\end{equation}
		
		holds for $z\in \Omega(\phi_{2i}+\epsilon,\phi_{2i+1}-\epsilon,r_0,\infty)$\\
		
		Using Lemma [\ref{new}], $B(z)$ blows up exponentially in each sector $S_j$ ; $j=0,1,...,m+1$. Therefore, there exist a sector $S_k(\te_k,\te_{k+1})$ such that  $B(z)$ blows up exponentially for any $\te\in (\te_k,\te_{k+1})\cap(\phi_{2i}+\epsilon,\phi_{2i+1}-\epsilon)$ for some $1\leq i\leq q$ and we have
		\begin{equation}\label{bloweq}
			\lim_{r\to\infty}\frac{\log\log
				|B(re^{i\te})|}{\log r}=\rho(B)
		\end{equation}
		
		for all sufficiently large $r$.\\
		
		From Lemma [\ref{gunlem}], there exists a set
		$F\subset[0,2\pi) $ with $m(F) = 0$ such that if $\te_{0}\in[0,2\pi)\setminus F$, then there is a constant $R_0 = R_0(\te_0) > 1$
		such that for all $z$ satisfying $\arg z =\te_0$ and $|z|\geq R_0$, we have
		\begin{equation}\label{guneq}
			\left| \frac{f^{(k)}(z)}{f(z)}\right| \leq |z|^{2\rho(f)} , \qquad k=1,2. 	
		\end{equation}
		
		Combining $\eqref{sde},\eqref{yeq},\eqref{bloweq}$ and $\eqref{guneq}$, there exists a sequence $z=re^{i\te}$ such that $\te\in(\te_k,\te_{k+1})\cap(\phi_{2i}+\epsilon,\phi_{2i+1}-\epsilon)\setminus F$, for some $1\leq i\leq q$ and $r>\max\{r_0,R_0\}$,\\ we have
		\begin{align*}
			|B(z)|&\leq \left|\frac{f''(z)}{f(z)}\right|+|A(z)|\left|\frac{f'(z)}{f(z)}\right| \\
			\exp(r^{\rho(B)-\epsilon'})&\leq\left|\frac{f''(re^{i\te})}{f(re^{i\te})}\right|+|(A(re^{i\te})-b_{j_0})+b_{j_0}|\left|\frac{f'(re^{i\te})}{f(re^{i\te})}\right|\\
			&<r^{2\rho(f)}(1+\exp(-CT(r,A))+|b_{j_0}|)
		\end{align*}
		which is a contradiction for sufficiently large $r$. Therefore, every non-trivial solution $f$ of \eqref{sde} is of infinite order.\\
		
		Now to show that  
		$$\min\{\rho(A),\rho(B)\}\leq\rho_2(f)\leq\max\{\rho(A),\rho(B)\},$$
		we need to investigate the following cases:
		\begin{enumerate}[(i)]
			\item If $\rho(A)<\rho(B)$, Then using Lemma [\ref{hyperless}] and Lemma[\ref{hypergreater}], we obtain $\rho_2(f)=\rho(B)$.
			
			\item If $\rho(B)\leq\rho(A)$, by applying Lemma [\ref{gunlem}], there exists a set $F\subset [0,2\pi)$ with $m(F)=0$ and a constant $c>0$ such that if $\te_{0}\in [0,2\pi)\setminus F$, then there is a constant $R_0(\te_0)>1$ such that
			\begin{equation}\label{guninfeq}
				\left| \frac{f^{(k)}(z)}{f^{(j)}(z)}\right| \leq c  \left[T(2r,f)\right]^{2(k-j)}
			\end{equation}
			holds for all $z$ satisfying $|z|\geq R_0$ and $\arg z=\te_0$.\\
			
			Combining  $\eqref{sde},
			\eqref{yeq}, \eqref{bloweq}$ and $\eqref{guninfeq}$, there exists a sequence $z=re^{i\te}$ such that for $\te\in(\te_k,\te_{k+1})\cap(\phi_{2i}+\epsilon,\phi_{2i+1}-\epsilon)\setminus F$ for some $1\leq i\leq q$ and $r>\max\{R_0,r_0\}$ we have
			\begin{align*}
				\exp(r^{\rho(B)-\epsilon'})&< |B(re^{\iota \theta})|\leq \left| \frac{f''(re^{\iota \theta})}{f(re^{\iota \theta})}\right| +|(A(re^{\iota \theta})-b_{j_{0}})+b_{j_{0}}| \left| \frac{f'(re^{\iota \theta})}{f(re^{\iota \theta})}\right| \\
				& \leq c T(2r,f)^4
				+(\exp(-CT(r,A))+|b_{j_0}|)cT(2r,f)^2 \\
				&\leq cT(2r,f)^4(1+o(1)).
			\end{align*}
			Hence, we get
			\begin{equation}\label{hypereq}
				\rho(B)-\epsilon'\leq \limsup_{r\to \infty} \frac{\log \log T(r,f)}{\log r}.
			\end{equation}
			where $\epsilon'>0$ is arbitrary.\\
			From \eqref{hypereq} and Lemma [\ref{hyperless}]  , we obtain
			$$\rho(B)\leq\rho_2(f)\leq\rho(A).$$ 
		\end{enumerate}

	\item Suppose that $a$ is a Borel exceptional value of $A(z)$. Then, $A(z)-a$ has zero as a Borel exceptional value. Applying Weierstrass factorisation theorem, we have
		$$A(z)-a=g(z)=d(z)e^{Q(z)},$$
		where, $Q(z)=b_nz^n+...+b_0 ;~b_n\neq0$ and $\rho(d)<\rho(A)=$ deg $Q(z)$. \\
		This implies $$|A(z)-a|=|d(z)e^{Q(z)}|=|d(z)|e^{Re\{Q(z)\}}$$
		Applying Lemma [\ref{polyn}], for $\te\in E^-\setminus E$, there exist a $R(\te)>1$ such that
		\begin{equation}\label{blleq}
			|A(re^{i\te})-a|<\exp((1-\epsilon)\delta(Q,\te)r^n)
		\end{equation}
		holds for all $r>R(\te)$. For simplicity, we can say that \eqref{blleq} holds for $\te\in\bigcup\limits_{i=1}^{n}(\psi_i,\phi_{i+1})\setminus E$ and $r>R(\te)$.\\
		
		Using Lemma [\ref{new}], there exists a sector $S_k(\te_k,\te_{k+1})$ such that  $B(z)$ blows up exponentially for any $\te\in(\psi_i,\phi_{i+1})\cap(\te_k,\te_{k+1})\setminus E$, for some $1\leq i\leq n$ and we have
		
		\begin{equation}\label{bloweqq}
			\lim_{r\to\infty}\frac{\log\log
				|B(re^{i\te})|}{\log r}=\rho(B)
		\end{equation}
		for all $r>R$.\\

		Using Lemma [\ref{gunlem}], there exists a set $F\subset (1,\infty)$ with finite logarithmic measure such that
		\begin{equation}\label{guneqq}
			\left| \frac{f^{(k)}(z)}{f(z)}\right| \leq |z|^{2\rho(f)} , \qquad k=1,2 	
		\end{equation}
		holds for all $z$ satisfying $|z|\notin F\cup [0,1]$.\\
		
		All together with $\eqref{sde}, \eqref{blleq}$, $\eqref{bloweqq}$, and $\eqref{guneqq}$, there exists a sequence $z=re^{i\te}$ such that for $\te\in(\psi_i,\phi_{i+1})\cap(\te_k,\te_{k+1})\setminus E$, for some $1\leq i\leq n$ and for all sufficient large $r\notin F$, we have
		\begin{align*}
			\exp(r^{\rho(B)-\epsilon})&\leq\left|\frac{f''(re^{i\te})}{f(re^{i\te})}\right|+|(A(re^{i\te})-a)+a|\left|\frac{f'(re^{i\te})}{f(re^{i\te})}\right|\\
			&\leq r^{2\rho(f)}+(\exp((1-\epsilon)\delta(Q,\te)r^n)+|a|)r^{2\rho(f)}\\
			&<r^{2\rho(f)}(1+o(1))
		\end{align*}
		This is a contradiction for sufficiently large $r$. Thus, every non-trivial solution $f$ of \eqref{sde} is of infinite order.\\
		Proceeding on similar lines as in $(1)$, we obtain the range of hyper-order of non-trivial solutions $f$ of \eqref{sde}. We omit the details. This completes the proof.
	\end{enumerate}	
	\end{proof}

\begin{proof}[\underline{Proof of Theorem \rm{\ref{main3}}}]
Let $f$ be a finite order non-trivial solution of equation \eqref{sde}. we aim to prove theorem by contradiction.
Suppose $c\in\mathbb{C}$ is a finite deficient value of $A(z)$. Then it follows from the definition that
$$\liminf_{r\to \infty}\frac{m(r,\frac{1}{A(z)-c})}{T(r,A)}=2\alpha>0$$
This gives
$$m(r,\frac{1}{A(z)-c})\geq\alpha T(r,A)$$
for all sufficiently large $r$.
Thus, for sufficiently large $r$, there exists $z_{r}=re^{\iota\theta_{r}}$ such that
\begin{equation*}
\log|A(z_r)-c|\leq -\alpha T(r,A).
\end{equation*}
From Lemma [\ref{kwonkimlemma}], we choose $\delta>0$ and $0<l<1/2$ in such a way that $\kappa(\rho(A),\delta)(l\log(1/l))$ is sufficiently small. We can also choose $\phi>0$,  $|\theta_{r}-\phi|\leq l$ such that
\begin{align*}
\log|A(re^{\iota \theta})-c|&=\log|A(re^{\iota \theta_{r}})-c|+\int_{\theta_{r}}^{\theta}\frac{d}{dt}\log|A(re^{it})-c|\, dt\\
&\leq -\alpha T(r,A)+r\int_{\theta_{r}}^{\theta}\left|\frac{(A-c)^{'}(re^{\iota t})}{(A-c)(re^{\iota t})}\right|\, dt\\
&\leq -\alpha T(r,A)+\kappa(\rho(A),\delta)(l\log(1/l))T(r,A)\\
&\leq 0
\end{align*}
holds for all $\theta\in[\theta_{r}-\phi,\theta_{r}+\phi]$ and for all sufficiently large $r\in E_{\delta}$, where $\underline{\log dens}(E_{\delta})>1-\delta$.
Thus we have
\begin{equation}\label{defeq}
A(re^{\iota \theta})\leq 1+c.
\end{equation}
for all sufficiently large $r\in E_{\delta}$ and for all $\theta\in[\theta_{r}-\phi,\theta_{r}+\phi]$.\\
	Using Lemma [\ref{new}], there exists a sector $S_k(\te_k,\te_{k+1})$ such that  $B(z)$ blows up exponentially for any $\te\in[\theta_{r}-\phi,\theta_{r}+\phi]\cap(\te_k,\te_{k+1})$ and we have

\begin{equation}\label{defblowupeq}
	\lim_{r\to\infty}\frac{\log\log
		|B(re^{i\te})|}{\log r}=\rho(B)
\end{equation}
for all sufficiently large $r\in E_{\delta}$.\\
Combining equations $\eqref{sde},\eqref{guneqq},\eqref{defeq}$ and $\eqref{defblowupeq}$, there exists a sequence $z=re^{\iota\theta}$ such that for all sufficiently large $r\in E_{\delta}\setminus (F\cup[0,1])$ and for all  $\theta\in[\theta_{r}-\phi,\theta_{r}+\phi]\cap(\te_k,\te_{k+1})$, we have
\begin{align*}
|B(re^{\iota\theta})|&\leq \left|\frac{f''(re^{\iota\theta})}{f(re^{\iota\theta})}\right|+|A(re^{\iota\theta})|\left|\frac{f'(re^{\iota\theta})}{f(re^{\iota\theta})}\right| \\
\exp(r^{\rho(B)-\epsilon'})&<\left|\frac{f''(re^{\iota\te})}{f(re^{\iota\te})}\right|+|(A(re^{\iota\te})|\left|\frac{f'(re^{\iota\te})}{f(re^{\iota\te})}\right|\\
&\leq r^{2\rho(f)}(2+c),
\end{align*}
which is a contradiction for sufficiently large $r\in E_{\delta}$. Hence, all non-trivial solutions of \eqref{sde} have infinite order.
\end{proof}

\end{document}